\tikzstyle{startstop} = [rectangle, rounded corners, minimum width=2.5cm, minimum height=0.8cm, text centered, draw=black, fill=red!30, font=\small]
\tikzstyle{process} = [rectangle, minimum width=3.5cm, minimum height=0.8cm, text centered, draw=black, fill=blue!30, font=\small, text width=3.5cm, align=center]
\tikzstyle{arrow} = [thick,->,>=stealth]
\newtheorem{proposition}{Proposition}[section]
\newtheorem{condition}{Condition}
\newtheorem{lemma}{Lemma}[section]
\newtheorem{definition}{Definition}[section]
\def\R{{\mathbb{R}}}
\begin{document}

\begin{frontmatter}

 \title{An Adaptive Lagrangian B-Spline Framework for Point Cloud Manifold Evolution}
 
\author{Muhammad Ammad\corref{cor1}\fnref{label1}}
\ead{21481199@life.hkbu.edu.hk}
\cortext[cor1]{Corresponding author}

\author{Leevan Ling\fnref{label1}}
\ead{lling@hkbu.edu.hk}

\affiliation[label1]{organization={Department of Mathematics, Hong Kong Baptist University},
            city={Kowloon Tong, Kowloon},
            country={Hong Kong}}

\begin{abstract}
We extend our recent curve-evolution framework based on localized B-spline interpolation to present an adaptive Lagrangian framework for the geometric evolution of point-cloud data representing smooth, codimension-one surfaces in $\R^3$. The method constructs overlapping, localized tensor-product B-spline patches, enabling direct, meshless surface evolution from discrete samples. Within each patch, the differentiable B-spline representation yields analytic, high-order estimates of intrinsic geometric invariants, supporting curvature-driven and geometry-coupled flows. The organization of control points facilitates coherent updates of both surface samples and spline coefficients under intrinsic velocity fields. A conditioning-aware formulation of the local interpolation system, combined with a Gauss–Seidel refinement of control points, maintains interpolation quality throughout the evolution. Adaptive knot insertion and point redistribution, guided by geometric error indicators and local sampling density, preserve surface resolution and regularity during deformation. Numerical experiments demonstrate efficient and accurate reproduction of surface evolution phenomena, including mean-curvature flow, anisotropic deformations, and coupled surface–field dynamics, establishing localized B-spline methods as precise and versatile tools for dynamic manifold approximation.

\bigskip
\noindent \textbf{Research highlights}:
\begin{itemize}
    \item  Localized tensor-product B-spline patches enable flexible, efficient interpolation on evolving point clouds without global meshing.
    \item Interpolant/control points are updated directly at each time step, avoiding reinterpolation.
    \item Control-point coefficients have a clear geometric meaning, allowing in-place updates under curvature-driven motion.
    \item The conditioning-aware formulation for local interpolation preserves the quality of the interpolation under surface changes.
    \item Adaptive point management (insertion/removal) maintains appropriate sampling and surface resolution.
    \item The numerical results confirm a reliable simulation of curvature-driven flows and coupled surface–scalar interactions.
\end{itemize}

\end{abstract}

\begin{keyword}
B-spline interpolation \sep Lagrangian approach \sep Curvature flows \sep Adaptive refinement \sep Reaction-Diffusion systems

\end{keyword}

\end{frontmatter}

\section{Introduction}\label{sec:intro}
The study of manifold evolution, particularly in higher-dimensional settings, occupies a central role in geometric modeling and computational geometry. Foundational work has demonstrated how geometric shapes evolve dynamically under the influence of intrinsic properties, such as curvature or external forces, while adhering to constraints that preserve key geometric quantities such as area, volume, and smoothness \cite{dolcetta2002area, ma2014non, tsai2015length}. These evolutionary processes are not only of theoretical interest but are also fundamental to a wide range of scientific and engineering applications, where the behavior of evolving surfaces critically influences phenomena in fields ranging from biological systems to materials science.

In modeling such evolutionary behavior, two primary computational strategies have emerged: the Eulerian and Lagrangian approaches. The Eulerian approach conceptualizes the evolving surface as a level set of a higher-dimensional scalar function, enabling the handling of topological changes and implicit geometries tracking \cite{Eulerian+bates2009, Eulerian+li2011, Eulerian+mikula20114, Eulerian+mikula2015, Eulerian+ren2023}. In contrast, the Lagrangian approach directly evolves the surface by tracing the trajectories of its constituent points \cite{Lagrangian+barrett2008, Lagrangian+torres2019, Lagrangian+hu2022, Lagrangian+duan2024, Lagrangian+bai2024}. This paper adopts the latter strategy, as it provides enhanced precision in tracking geometric features and facilitates localized control over surface deformation, attributes that are particularly advantageous when analyzing curvature-driven flows and complex surface behavior.

A prime example of curvature-driven evolution is the mean curvature flow, which can produce minimal surfaces and illustrates the fundamental role of curvature in geometric dynamics \cite{minarcik2022minimal}. The behavior of more intricate surfaces, such as hyperbolic paraboloids or dynamically evolving spherical caps, can also be effectively analyzed through their geometric descriptors, including velocity fields, normals, and curvature-related quantities \cite{hyper+wang2023, spherical+abels2015, spherical+epshteyn2021}. A central challenge in such simulations is to maintain geometric integrity over time. Without appropriate correction mechanisms, curvature-induced motions can introduce distortions, particularly in regions of high curvature or anisotropic geometry. Recent research has incorporated additional geometric or physical constraints into the evolution process to mitigate such issues, which help preserve structural features and stabilize the simulation \cite{ALE+elliott2012, ALE+kovacs2024}.

These constraints are mathematically significant and crucial for accurately modeling real-world phenomena. In applications such as fluid dynamics and biology, evolving interfaces must respond sensitively to both internal forces (e.g., curvature) and external stimuli (e.g., pressure, surfactant concentration). For example, the evolution of the curvature-dependent interface plays an essential role in modeling surfactant transport in multiphase flows \cite{multiphase+worner2012, multiphase+garcke2013}, as well as in biological contexts such as cell motility, tissue morphogenesis, and pattern formation \cite{cellmotility+Elliott2012, pattern+barreira2011, pattern+kim2020}. Similarly, in material science, dynamic surface evolution is key to understanding diffusion-driven grain boundary motion, phase transitions, and interfacial interactions \cite{grain+fife2001, fluid+ferziger2019, fluid+joos2023}. These applications demand computational frameworks that are not only accurate and stable, but also adaptable to complex, data-driven surface geometries.

In this context, we propose a Lagrangian framework for evolving point-cloud data through localized B-spline surface fitting. The method is structured around three key components: local B-spline surface fitting, which constructs smooth B-spline patches over scattered point-cloud data using localized fitting and avoids the need for global parameterization; evolution of the velocity-guided control points, which evolves control points and data points using a velocity field typically defined through geometric quantities such as surface normals and curvature, and refines them via Gauss-Seidel optimization to maintain data interpolation; and adaptive refinement, which dynamically inserts knots to refine surface resolution and adds or removes points to control sampling density and enhance regularity during evolution. 

The remainder of this paper is structured as follows. Section~\ref{sec:preliminaries} reviews the mathematical preliminaries, including the Lagrangian framework, B-spline surface representations, and differential geometric formulations. Section \ref{sec:adaptive_bspline_surface} presents our adaptive B-spline surface fitting method, including local interpolation and refinement of control points. Section \ref{sec:numerical_results} reports numerical experiments that explore key parameters and evaluate the performance of the proposed framework through dynamic surface simulations. Finally, Section \ref{sec:conclusion} summarizes our findings and outlines the directions for future research.

\section{Fundamentals of Lagrangian and Spline Geometry}
\label{sec:preliminaries}
This section establishes the mathematical background and notation for our point-cloud evolution framework. We first outline the Lagrangian representation of evolving manifolds, then introduce tensor-product B-spline surfaces, discuss basis-function types, and review classical surface interpolation techniques.

\subsection{Lagrangian Dynamics of Evolving Manifolds}\label{sec:general_framework}
Let $ \mathcal{M}(t) \subset \mathbb{R}^d $ denote a family of embedded, orientable, smooth manifolds of dimension $ d_{\mathcal{M}} < d $, evolving over $ t \in [0, T] $. The evolution of $ \mathcal{M}(t) $ is induced by a time-dependent diffeomorphism
\[
\Phi_t : \mathcal{M}(0) \to \mathcal{M}(t) \subset \mathbb{R}^d,
\]
such that for any material coordinate $ \vec{X} \in \mathcal{M}(0) $, the trajectory $ \vec{x}(t) = \Phi_t(\vec{X}) $ satisfies the flow equation
\begin{equation}
    \frac{d\vec{x}}{dt} = \vec{V}(\vec{x}, t), \qquad \vec{x}(0) = \vec{X},
\end{equation}
where $ \vec{V}: \bigcup_{t \in [0, T]} \mathcal{M}(t) \times \{t\} \to \mathbb{R}^d$ is a smooth velocity field that prescribes the geometric evolution.

When a closed-form description of $ \mathcal{M}(t) $ is unavailable for $t > 0$, we approximate the manifold by a discrete time-dependent sampling:
\begin{equation}
    \mathcal{X}(t) := \{ \vec{x}_i(t) \}_{i=1}^{N} \subset \mathcal{M}(t),
\end{equation}
advected by the flow. A standard explicit Lagrangian time-stepping scheme is:
\begin{equation}
    \vec{x}_i(t + \Delta t) = \vec{x}_i(t) + \Delta t \cdot \vec{V}(\vec{x}_i(t), t).
    \label{eq:moving_datavelocity}
\end{equation}

This discrete representation must accurately approximate local differential invariants, such as normals and principal curvatures, without a global coordinate chart. To support long-term evolution while maintaining geometric structure, the point set $ \mathcal{X}(t) $ is adaptively refined using local geometric criteria, maintaining both intrinsic and extrinsic characteristics of the evolving manifold.

Our previous work in \cite{ammad2025eabe} developed this Lagrangian framework specifically for evolving plane curves represented by point clouds. In that curve version, we partitioned the points into overlapping stencils, fit local B-spline interpolants to approximate geometric quantities like curvature and normals, and evolved both the data points and B-spline coefficients (which act like geometric "control points") under curvature-driven flows. Adaptive knot insertion and point redistribution were used to handle density changes without frequent re-interpolation, making it efficient for 1D manifolds. This allowed seamless simulations of curvature flows and coupled reaction-diffusion systems on curves. However, extending to codimension-one surfaces in $\R^3$ requires handling 2D parameterizations, tensor-product basis for surface patches, and higher-dimensional geometric invariants like mean curvature, while preserving the core idea of evolving control structures alongside the data. The next section details these extensions, building on the foundational concepts outlined here.

\subsection{Tensor-Product B-Spline Surfaces and Classical Interpolation}
\label{sec:bspline_surface}
B-splines provide a compact representation of smooth surfaces, central to our interpolation on evolving point clouds. A tensor-product B-spline surface of bi-degree $(p, q)$ is defined by a grid of control points $\{\vec{P}_{i,j} \in \mathbb{R}^d\}_{i,j=1}^{m,n}$ and nondecreasing knot vectors $\Psi = \{\psi_1, \dots, \psi_{m+p+1}\}$ and $\Theta = \{\theta_1, \dots, \theta_{n+q+1}\}$. The univariate basis functions $\phi_i^p(u)$ and $\varphi_j^q(v)$ are defined recursively via Cox–de Boor relations: for degree 0, $\phi_i^0(u) = 1$ if $\psi_i \le u < \psi_{i+1}$ and 0 otherwise; for higher degrees, $\phi_i^p(u) = \frac{u - \psi_i}{\psi_{i+p} - \psi_i} \phi_i^{p-1}(u) + \frac{\psi_{i+p+1} - u}{\psi_{i+p+1} - \psi_{i+1}} \phi_{i+1}^{p-1}(u)$, with terms having zero denominators taken as zero (an analogous recurrence applies to $\varphi_j^q(v)$ over $\Theta$). This yields the surface $ \vec{\mathcal{S}} : [0,1]^2 \to \mathbb{R}^d$ by:

\begin{equation}
    \vec{\mathcal{S}}(u,v) = \sum_{i=1}^{m} \sum_{j=1}^{n} \vec{P}_{i,j} \, \phi_i^p(u) \, \varphi_j^q(v).
    \label{eq:bspline_surface}
\end{equation}
The surface inherits local support and smoothness from the basis, with continuity $C^{p-1}$ in $u$ and $C^{q-1}$ in $v$ at minimal knot multiplicities \cite{ammad2019cubic,piegl2012nurbs}.

Basis types, determined by knot vectors, affect continuity and interpolation. Periodic basis ensure cyclic continuity for closed surfaces but do not interpolate boundaries; they satisfy $\psi_{m+j} = \psi_{m+j-1} + (\psi_j - \psi_{j-1})$ for $j = 1, \dots, p+1$ \cite{marsh2005applied}. Open uniform basis enforce boundary interpolation by repeating the first and last $p+1$ knots, making them suitable for precise boundary control and interpolating corner points like $\vec{\mathcal{S}}(0,0) = \vec{P}_{1,1}$. In this work, we use open uniform basis for their boundary properties, though periodic ones suit closed topologies. Complementary approaches such as generalized trigonometric Bézier representations offer alternative formulations with enhanced shape control for specific modeling scenarios \cite{ammad2022novel}.

For classical interpolation on a given $m\times n$ data grid ${\vec{Q}_{i,j}}$ with prescribed parameter samples ${\tilde{u}_j}$ and ${\tilde{v}_i}$ (e.g., chord-length or centripetal along isoparametric lines):
\begin{equation}
\tilde{u}_1 = 0, \quad \tilde{u}_j = \tilde{u}_{j-1} + \frac{\|\vec{Q}_{j} - \vec{Q}_{j-1}\|^\alpha}{\sum_{\ell=2}^{n} \|\vec{Q}_{\ell} - \vec{Q}_{\ell-1}\|^\alpha}, \quad j = 2, \dots, n.
    \label{eq:classical+para}
\end{equation}
Knot vectors are constructed by averaging, yielding a sparse linear system for $\vec{P}_{i,j}$, solvable via iterative methods like Conjugate Gradient \cite{botsch2005+CG}. Surface quality depends on parameterization; refinements adjust knots or parameters to minimize residuals in high-curvature regions \cite{michel2021new,guo2026two}.

\subsection{Differential Geometry of B-spline Surfaces}
\label{sec:nandkappa}
The differential geometry of B-spline surfaces governs their intrinsic and extrinsic behavior, and underpins curvature-driven modeling and geometric evolution. Let $  \vec{\mathcal{S}}(u,v) : \Omega \subset \mathbb{R}^2 \rightarrow \mathbb{R}^d $ be a $ C^2 $ parametric surface defined by~\eqref{eq:bspline_surface}.

The tangent vectors at any point $ (u,v) \in \Omega $ are given by the first-order partial derivatives: $ \vec{\mathcal{S}}_u = \partial  \vec{\mathcal{S}}/\partial u,  \vec{\mathcal{S}}_v = \partial  \vec{\mathcal{S}}/\partial v.$
They span the  tangent plane and define the first fundamental form:
\[
I = E \, du^2 + 2F \, du \, dv + G \, dv^2,
\]
where $E = \langle  \vec{\mathcal{S}}_u,  \vec{\mathcal{S}}_u \rangle$, $F = \langle  \vec{\mathcal{S}}_u,  \vec{\mathcal{S}}_v \rangle$, and $G = \langle  \vec{\mathcal{S}}_v,  \vec{\mathcal{S}}_v \rangle$.
This metric describes intrinsic geometric quantities such as arc length, angle, and area. The unit normal vector $ \vec{n}(u,v) $ is defined (in $ \mathbb{R}^3 $) by:
\begin{equation}
\vec{n}(u,v) = \frac{ \vec{\mathcal{S}}_u \times  \vec{\mathcal{S}}_v}{\| \vec{\mathcal{S}}_u \times  \vec{\mathcal{S}}_v\|},
\label{eq:normalvector}
\end{equation}
providing the orientation necessary for curvature analysis. In higher dimensions, this can be generalized using exterior algebra and orthonormalization procedures.

To characterize the surface’s extrinsic curvature, we compute the second-order derivatives: $ \vec{\mathcal{S}}_{uu},  \vec{\mathcal{S}}_{uv}, \vec{\mathcal{S}}_{vv},$ and project them onto the normal to obtain the second fundamental form:
\[
II = L \, du^2 + 2M \, du \, dv + N \, dv^2,
\]
where $L = \langle  \vec{\mathcal{S}}_{uu}, \vec{n} \rangle$, $M = \langle  \vec{\mathcal{S}}_{uv}, \vec{n} \rangle$, and $N = \langle  \vec{\mathcal{S}}_{vv}, \vec{n} \rangle$.
The mean curvature $ H $, representing the average principal curvature at a point, is given by the classical expression:
\begin{equation}
H = \frac{EN - 2FM + GL}{2(EG - F^2)}.
\label{eq:mean_curvature}
\end{equation}

These quantities are efficiently evaluated from B-spline representations using recursive differentiation of the basis functions. They serve as key inputs to variational models and geometric evolution algorithms.

\section{Localized B-spline Surface Evolution for Dynamic point-clouds}
\label{sec:adaptive_bspline_surface}
To extend the curve-evolution algorithm from \cite{ammad2025eabe} to surfaces, we first outline its core steps in pseudocode form (Algorithm~\ref{alg:curve_version}). This provides a clear reference for what can be reused directly and what requires adaptation for higher-dimensional manifolds.

\begin{algorithm}[H]
\caption{Curve Evolution Algorithm from \cite{ammad2025eabe}}
\label{alg:curve_version}
\begin{algorithmic}[1]
\State \textbf{Input:} Initial point cloud $\mathcal{X}(0)$, time step $\Delta t$, final time $T$, tolerances (e.g., deviation $\epsilon_{\mathrm{tol}}$, interpolation error $\tau$)
\State Partition $\mathcal{X}(0)$ into non-overlapping core covers and extend to overlapping stencils
\For{each stencil}
    \State Parameterize points using chord-length method
    \State Fit local B-spline interpolant (open uniform basis)
    \State Refine control points via knot insertion if deviation $>\epsilon_{\mathrm{tol}}$
    \State Compute normals and curvature from B-spline derivatives
\EndFor
\While{$t < T$}
    \For{each point $\vec{x}_i(t) \in \mathcal{X}(t)$}
        \State Update $\vec{x}_i(t + \Delta t) = \vec{x}_i(t) + \Delta t \cdot \vec{V}(\vec{x}_i(t), t)$ (e.g., curvature-driven)
        \State Evolve associated control points similarly
    \EndFor
    \For{each stencil}
        \State Evaluate interpolation error; if $>\tau$, optimize control points via Gauss-Seidel
        \State Insert/remove/redistribute points to maintain density
    \EndFor
    \State $t \gets t + \Delta t$
\EndWhile
\State \textbf{Output:} Evolved point cloud $\mathcal{X}(T)$
\end{algorithmic}
\end{algorithm}

Several components of Algorithm~\ref{alg:curve_version} can be used almost exactly for surfaces: the Lagrangian point updates (line 11) and control point evolution (line 12) translate directly, as they operate on discrete points and coefficients without depending on dimensionality. Other parts are reused but with extensions, such as the Gauss–Seidel refinement on control points (line 15; extended to handle surface control nets) and adaptive point management (line 16; extended to update points by tensor-product knot insertion in 2D). However, further extensions are needed for surface-specific aspects. Partitioning now creates 2D patches instead of 1D stencils (Section~\ref{sec:patches}), parameterization shifts to projection-based methods in a 2D domain (Section~\ref{sec:bspline_surface_interpolation}), B-spline fitting uses tensor-products rather than univariate basis (also Section~\ref{sec:bspline_surface_interpolation}), and geometric computations involve surface differentials like mean curvature in $\R^3$ (Section~\ref{sec:nandkappa}). We also add conditioning-aware rotations for regularity (Section~\ref{sec:stability_bsplines}) and Greville-based refinement metrics tailored to surfaces (Section~\ref{subsec:control_point_refinement}). These changes build on the curve method while handling the added complexity of 2D manifolds.
\subsection{Constructing Overlapping Patches for Local Surface Approximation}
\label{sec:patches}

Let $\mathcal{X}(0) = \{\vec{x}_i(0)\}_{i=1}^{N} \subset \mathbb{R}^d$ denote a discrete point-cloud sampled from a smooth initial surface $ \vec{\mathcal{S}}(0) \subset \mathbb{R}^d$. To enable localized surface fitting and geometric estimation, we partition $\mathcal{X}(0)$ into a collection of overlapping patches $\{\mathcal{P}_k\}_{k=1}^{M}$, each serving as a computational domain for B-spline interpolation and curvature-driven evolution. The construction of these patches proceeds in two stages: first, a non-overlapping decomposition into disjoint core patches is obtained; then, each core is extended to form an overlapping neighborhood that facilitates smooth geometric transitions.

In the first stage, disjoint \emph{core patches} $\{\mathcal{P}_k^{\mathrm{core}}\}_{k=1}^{M}$ are constructed using an iterative algorithm of k nearest neighbors (kNN). At each iteration, a seed point $\vec{x}_{s,k}$ is selected from the set of unassigned points, and its $m_{c,k}$ nearest neighbors define the core:
\begin{equation}
\mathcal{P}_k^{\mathrm{core}} = \left\{ \vec{x}_j \in \mathcal{X}(0) \mid \vec{x}_j \in \text{kNN}(\vec{x}_{s,k}, m_{c,k}) \right\}.
\label{eq:core_patch}
\end{equation}
Once a core patch is identified, its constituent points are removed from the candidate set to ensure disjointness. This process continues until each point in $\mathcal{X}(0)$ is assigned to exactly one core patch, thereby satisfying
\begin{equation*} \label{eq:cover}
\bigcup_{k=1}^{M} \mathcal{P}_k^{\mathrm{core}} = \mathcal{X}(0), \qquad \mathcal{P}_i^{\mathrm{core}} \cap \mathcal{P}_j^{\mathrm{core}} = \emptyset, \quad \text{for } i \neq j.
\end{equation*}

To ensure geometric representativeness, each core patch is associated with a well-defined center. We first compute the centroid
\[
\tilde{\vec{x}}_{c,k} = \frac{1}{m_{c,k}} \sum_{\vec{x}_j \in \mathcal{P}_k^{\mathrm{core}}} \vec{x}_j,
\]
and define the actual center $\vec{x}_{c,k}$ as the point in $\mathcal{P}_k^{\mathrm{core}}$ nearest to $\tilde{\vec{x}}_{c,k}$:
\[
\vec{x}_{c,k} = \arg\min_{\vec{x} \in \mathcal{P}_k^{\mathrm{core}}} \|\vec{x} - \tilde{\vec{x}}_{c,k}\|.
\]
This ensures that the center lies within the original data and can serve as a stable reference for further neighborhood construction.
\begin{figure}
\centering
\begin{tikzpicture}
    \node[anchor=south west, inner sep=0] (img) at (0,0) {\includegraphics[width=10cm]{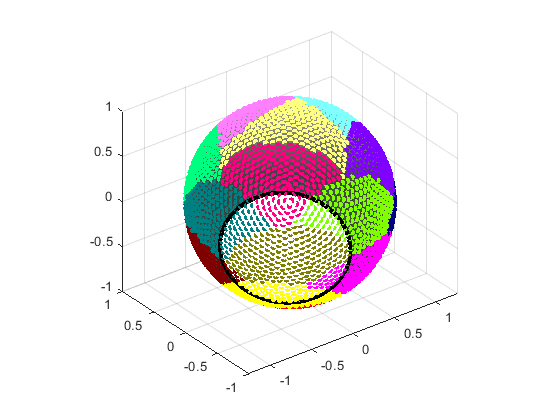}};
    \begin{scope}[x={(img.south east)},y={(img.north west)}]
        \node at (0.28,0.7) {$\mathcal{P}_k^\text{core}$};
        \draw[->, thick] (0.29,0.68) -- (0.34,0.65);
        \node at (0.45,0.15) {$\mathcal{P}_k$};
        \draw[->, thick] (0.45,0.18) -- (0.50,0.35);
    \end{scope}
\end{tikzpicture}
\caption{
Visualization of the patch construction on a point cloud sampled from a spherical surface. Each distinct color denotes a disjoint core patch $ \mathcal{P}_k^{\mathrm{core}} $, with no point shared across patches. The highlighted circular region illustrates an extended patch $ \mathcal{P}_k $, which includes its core (marked by uniform color) together with additional neighboring points from adjacent patches.
}
\label{fig:patching}
\end{figure}

In the second stage, each core patch is expanded to an \emph{overlapping patch} $\mathcal{P}_k$ by augmenting it with $m_{b,k}$ additional points from neighboring regions. A secondary kNN search centered on $\vec{x}_{c,k}$ yields the boundary region:
\begin{equation}
\mathcal{P}_k = \mathcal{P}_k^{\mathrm{core}} \cup \mathcal{P}_k^{\mathrm{bdy}}, \qquad
\mathcal{P}_k^{\mathrm{bdy}} = \left\{ \vec{x}_j \in \mathcal{X}(0) \setminus \mathcal{P}_k^{\mathrm{core}} \mid \vec{x}_j \in \text{kNN}(\vec{x}_{c,k}, m_{b,k}) \right\}.
    \label{eq:core+overlap}
\end{equation}
Each resulting patch $\mathcal{P}_k$ contains $m_k = m_{c,k} + m_{b,k}$ points and overlaps with adjacent patches, allowing for smooth transitions and consistent local geometry.

This construction is illustrated in Figure~\ref{fig:patching}. At any time $t$, the dynamic point cloud $\mathcal{X}(t)$ remains covered by the collection of core patches, with
\[
\mathcal{X}(t) = \bigcup_k \mathcal{P}_k^{\mathrm{core}} \subseteq \bigcup_k \mathcal{P}_k,
\]
guaranteeing that each point contributes to at least one local surface reconstruction.

This localized partitioning provides the structural foundation for our surface evolution framework. In the following subsection, we construct B-spline surface interpolants over each patch and describe how these functions are used to extract differential geometric quantities that drive the motion of the underlying point cloud.
\subsection{B-spline Interpolation Technique for Dynamic Surface Modeling}
\label{sec:bspline_surface_interpolation}

Given a fixed patch $ \mathcal{P}_k \subset \mathcal{X}(t) $, we now describe how to construct a smooth B-spline surface interpolant over its local neighborhood. Since the points in $ \mathcal{P}_k $ are scattered and lack a regular structure, we begin by embedding them into a two-dimensional parametric domain suitable for tensor-product B-spline interpolation.

For structured data arranged in grids or rows, such as height fields or structured meshes, classical chordal and centripetal methods are commonly employed \cite{piegl2012nurbs}, as described in Equation~\eqref{eq:classical+para}. For unstructured geometric data, including polygonal meshes and point clouds, a range of techniques has been developed based on harmonic maps, conformal flattening, and local tangent space projections \cite{liu2008local+mesh, levy2023+mesh, zhang2010+pcloud, meng2013+pcloud, choi2016+pcloud}.  We adopt a local projection-based approach that aligns each patch to its principal directions and normalizes the resulting planar coordinates into a standard parametric domain.

Let $ \mathcal{P}_k = \{\vec{x}_r\}_{r=1}^N \subset \mathbb{R}^3 $ denote the set of points in the $ k $-th patch. To construct a parametric domain, we first calculate the empirical mean
\[
\mathbb{E}[\vec{x}] = \frac{1}{N} \sum_{r=1}^{N} \vec{x}_r, \quad \text{and the covariance matrix} \quad \mathcal{C} = \frac{1}{N} \sum_{r=1}^{N} (\vec{x}_r - \mathbb{E}[\vec{x}]) (\vec{x}_r - \mathbb{E}[\vec{x}])^\top.
\]
and let $ \{\mathbf{e}_1, \mathbf{e}_2, \mathbf{e}_3\} \subset \mathbb{R}^3 $ be the eigenvectors of $ \mathcal{C} $, ordered by decreasing eigenvalue. These define a local orthonormal frame; $\mathbf{e}_3$ approximates the local surface normal.

Define rotation matrix $ \mathbf{R} = [\mathbf{e}_1 \ \mathbf{e}_2 \ \mathbf{e}_3] \in \mathrm{SO}(3) $, and rotate the data into this canonical frame:
\[
\vec{x}_r' = \mathbf{R}^{\top} (\vec{x}_r - \mathbb{E}[\vec{x}]) \in \mathbb{R}^3.
\]
This step is illustrated in Figure~\ref{fig:main}(b), where the rotated patch is aligned with the coordinate axes. Next, we project each transformed point onto the $ (x,y) $-plane using the projection matrix
\[
\mathbf{\Pi} = \begin{bmatrix} 1 & 0 & 0 \\ 0 & 1 & 0 \end{bmatrix} \in \mathbb{R}^{2 \times 3}, \quad \text{yielding} \quad \mathbf{q}_r = \mathcal{T}(\vec{x}_r) = \mathbf{\Pi} \vec{x}_r' \in \mathbb{R}^2.
\]
as shown in Figure~\ref{fig:main}(c). This yields a planar embedding of the patch suitable for parameterization. To normalize the coordinates into the unit square $ [0,1]^2 $, we compute the axis-aligned bounding box:
\[
x_{\min}' = \min_r [\mathbf{q}_r]_1, \quad x_{\max}' = \max_r [\mathbf{q}_r]_1, \quad 
y_{\min}' = \min_r [\mathbf{q}_r]_2, \quad y_{\max}' = \max_r [\mathbf{q}_r]_2,
\]
and define the scaled parameter values
\[
u_r = \frac{[\mathbf{q}_r]_1 - x_{\min}'}{x_{\max}' - x_{\min}'}, \qquad
v_r = \frac{[\mathbf{q}_r]_2 - y_{\min}'}{y_{\max}' - y_{\min}'}.
\]

Thus, each data point $ \vec{x}_r \in \mathbb{R}^3 $ is associated with a parametric coordinate $ (u_r, v_r) \in [0,1]^2 $, as shown in Figure~\ref{fig:main}(c).
\begin{figure}[t]
\centering
\begin{tikzpicture}
    \node[anchor=south west, inner sep=0] (img) at (0,0) {\includegraphics[width=15cm]{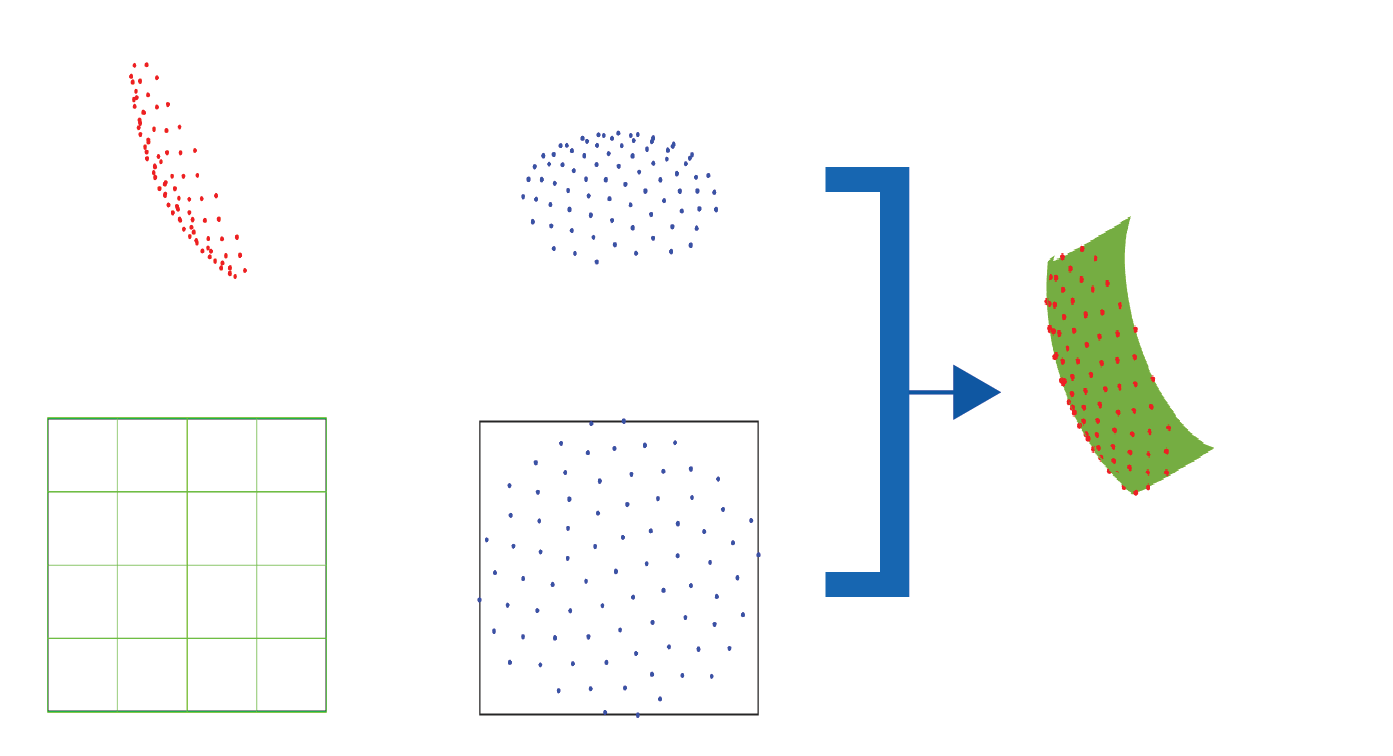}};
    \begin{scope}[x={(img.south east)}, y={(img.north west)}]
        \draw[line width=0.4mm, ->, black] (0.05, 0.7) -- (0.08, 0.7) node[anchor=north, black] {$y$}; 
        \draw[line width=0.4mm, ->, black] (0.05, 0.7) -- (0.05, 0.75) node[anchor=west, black] {$z$}; 
        \draw[line width=0.4mm, ->, black] (0.05, 0.7) -- (0.035, 0.66) node[anchor=east, black] {$x$}; 
        \draw[line width=0.5mm, ->, black] (0.32, 0.01) -- (0.36, 0.01) node[anchor=north, black] {$u$}; 
        \draw[line width=0.5mm, ->, black] (0.32, 0.01) -- (0.32, 0.07) node[anchor=east, black] {$v$}; 
        \node[anchor=west, black] at (0.07, 0.57) {\scriptsize Data points};
        \node[anchor=west, black] at (0.38, 0.03) {\scriptsize Parametrization};
        \node[anchor=west, black] at (0.08, 0.03) {\scriptsize Knot vector};
        \node[anchor=west, black] at (0.73, 0.32) {\scriptsize Surface interpolation };
        \node[anchor=west, black] at (0.115, 0.53) {\scriptsize (a)};
        \node[anchor=west, black] at (0.44, 0.53) {\scriptsize (b)};
        \node[anchor=west, black] at (0.44, -0.03) {\scriptsize (c)};
        \node[anchor=west, black] at (0.115, -0.03) {\scriptsize (d)};
        \node[anchor=west, black] at (0.8, 0.26) {\scriptsize (e)};
        \draw[line width=0.4mm, ->, black] (0.2, 0.7) -- (0.35, 0.7) node[midway, above, black, font=\scriptsize] {Rotation};
        \draw[line width=0.4mm, ->, black] (0.44, 0.63) -- (0.44, 0.45) node[midway, left, black, rotate=90, xshift=0.8cm, yshift=0.2cm] {\scriptsize Projection};
    \end{scope}
\end{tikzpicture}
\caption{B-spline surface interpolation pipeline. (a) Input data points in patch $ \mathcal{P}_k \subset \mathbb{R}^3 $ (b) rotation to local frame; (c) projection to a parametric plane (d) uniform knot vector construction (e) resulting B-spline interpolant.}
\label{fig:main}
\end{figure}
We now define a surface interpolant over the normalized domain. Let $ \{ \phi_\ell(u,v) \}_{\ell=1}^{s} $ be scalar-valued basis functions on $ [0,1]^2 $, and let $ \vec{P}_\ell \in \mathbb{R}^3 $ be the associated control vectors. The interpolating surface is 
\[
 \vec{\mathcal{S}}(u,v) = \sum_{\ell=1}^{N} \vec{P}_\ell \, \phi_\ell(u,v).
\]
To satisfy the interpolation constraint at each parameter location $ (u_r, v_r) $, we impose
\begin{equation}
S_j(u_r, v_r) = \sum_{\ell=1}^{N} [\vec{P}_\ell]_j \, \phi_\ell(u_r, v_r) = [\vec{x}_r]_j, \quad \text{for } j = 1,2,3, \quad r = 1,\dots,N.
\label{eq:b-sp_surafce_inter}
\end{equation}
Solving this system for each coordinate $ j $ determines the full set of control vectors
\[
\vec{P}_\ell = \begin{bmatrix} [\vec{P}_\ell]_1 \\ [\vec{P}_\ell]_2 \\ [\vec{P}_\ell]_3 \end{bmatrix} \in \mathbb{R}^3.
\]

When the basis functions $ \phi_\ell(u,v) $ are chosen as tensor-product B-spline functions over a uniform knot vector, we obtain a smooth B-spline surface interpolant. A representative knot layout is shown in Figure~\ref{fig:main}(d), and the resulting interpolated surface is shown in Figure~\ref{fig:main}(e).

Once the tensor-product B-spline surface S(u,v) is constructed for each patch, we obtain a compact, differentiable representation suitable for geometric processing. However, the conditioning of the interpolation system can degrade under anisotropic sampling or poorly distributed parameter values, leading to instability. To mitigate this, the next section introduces a conditioning-aware reformulation that preserves the B-spline basis and knots while improving the interpolation matrix via parameter-space rotation and normalization.
\subsection{Conditioning-Aware Reformulation for B-Spline Surface Interpolation}
\label{sec:stability_bsplines}
The linear system arising from interpolation may be ill-conditioned when the parameterization exhibits poor aspect ratios or irregular sampling. We present a reformulation that preserves the B-spline basis while improving the conditioning of the interpolation matrix, without modifying the knot vectors (cf. \cite{wang2008+knotgeneration}).

Let \( \mathcal{P}_k = \{\vec{x}_r\}_{r=1}^{N} \subset \mathbb{R}^3 \) be the set of points in a local patch, and let \( \{(u_r, v_r)\}_{r=1}^{N} \subset [0,1]^2 \) denote their associated parametric coordinates. The tensor-product B-spline surface is given by
\begin{equation}
\vec{x}_r =  \vec{\mathcal{S}}(u_r, v_r) = \sum_{i=1}^{m} \sum_{j=1}^{n} \phi_i^p(u_r) \,\varphi_j^q(v_r)\, \vec{P}_{i,j}, \quad r = 1, \dots, N,
\end{equation}
where \( \phi_i^p \) and \( \varphi_j^q \) are univariate B-spline basis functions of degrees \(p\) and \(q\), and \( \vec{P}_{i,j} \in \mathbb{R}^3 \) are control points.

To highlight the tensor-product structure, we write the evaluation hierarchically:
\[
\vec{x}_r = \sum_{i=1}^{m} \phi_i^p(u_r)\,\vec{C}_i(v_r),
\qquad
\vec{C}_i(v_r) = \sum_{j=1}^{n} \varphi_j^q(v_r)\,\vec{P}_{i,j},
\]
so that the surface can be viewed as a family of B-spline curves in the \(v\)-direction, modulated by a B-spline basis in the \(u\)-direction.

Collecting all data points into
\[
X = 
\begin{bmatrix}
\vec{x}_1^\top \\ \vdots \\ \vec{x}_N^\top
\end{bmatrix} \in \mathbb{R}^{N \times 3},
\qquad
P = 
\begin{bmatrix}
\vec{P}_{1,1}^\top & \cdots & \vec{P}_{m,n}^\top
\end{bmatrix}^\top \in \mathbb{R}^{mn \times 3},
\]
and defining the evaluation matrix \( M \in \mathbb{R}^{N \times mn} \) by
\[
M_{r,(i,j)} = \phi_i^p(u_r)\,\varphi_j^q(v_r),
\qquad 1 \le r \le N,\ 1 \le i \le m,\ 1 \le j \le n,
\]
we obtain the linear system
\begin{equation}
X = M P.
\label{eq:interp_system}
\end{equation}

In many of our examples we choose \(N = mn\), so that \(M \in \mathbb{R}^{N\times N}\) is square and the surface interpolates the data on each patch. A necessary condition for a unique interpolant in this case is that \(M\) has full row rank,
\[
\operatorname{rank}(M) = N.
\]
For general scattered configurations, however, there are few theoretical results that guarantee full rank of the associated tensor-product B-spline matrix beyond the regular-grid case. Even if all parameter pairs \(\{(u_r,v_r)\}\) are distinct, the matrix \(M\) may still be rank-deficient or severely ill-conditioned when the sampling is highly irregular.

Our aim here is not to derive abstract rank conditions, but rather to improve the numerical conditioning of the interpolation problem defined by \eqref{eq:interp_system}. To this end, before forming $M$, we apply a rotation-plus-normalization of the parameter pairs $\{(u_r,v_r)\}$ and select, among a finite set of candidate rotations, the one that minimizes the spectral condition number $\kappa_2(M)$. This transformation leaves the spline space itself unchanged: the polynomial degrees and knot vectors remain fixed, the number and arrangement of control points are not altered, and no additional constraints are introduced. What changes is only the parametrization of the given data within the same unit square $[0,1]^2$; the coordinates $(u_r,v_r)$ are mapped to new coordinates that lead to a better-conditioned evaluation of the same tensor-product basis. In other words, the ansatz $\vec{\mathcal{S}}$ and the unknowns $P$ are the same, but the coordinate frame in which the basis functions are sampled is chosen in a conditioning-aware manner.

This conditioning-aware preprocessing should be regarded as a practical heuristic to stabilize the interpolation system. It can be combined with more robust fitting strategies when necessary. In particular, if the smallest singular value of \(M\) is below a prescribed tolerance, instead of attempting to invert \(M\) we may solve a least-squares problem with more data points than unknowns,
\[
P = \arg\min_{\tilde P \in \mathbb{R}^{mn\times 3}} \|M \tilde P - X\|_2^2,
\]
and interpret the singular values of \(M\) as an indicator of the quality of the approximation. Such a least-squares formulation is well known to be more reliable for highly irregular or noisy point sets and provides a natural complement to our conditioning-aware reformulation.
\begin{condition}[Rotation--normalization conditioning]
\label{cond:stability}
Let $\{(u_r,v_r)\}_{r=1}^N \subset [0,1]^2$ be parameter values for a patch, and let the spline basis (degree and knot vectors) be fixed a priori and independent of $\omega$. For $\omega\in[0,2\pi]$, define the rotation
\[
T(\omega)=
\begin{bmatrix}
\cos\omega & -\sin\omega\\
\sin\omega & \cos\omega
\end{bmatrix}\in\mathrm{SO}(2),
\qquad
\begin{bmatrix}\hat u_r(\omega)\\ \hat v_r(\omega)\end{bmatrix}
= T(\omega)\begin{bmatrix}u_r\\ v_r\end{bmatrix}.
\]
Define normalized parameters by independent affine min--max scalings along each axis:
\[
u_r(\omega)=
\frac{\hat u_r(\omega)-\min_{1\le s\le N} \hat u_s(\omega)}
{\max_{1\le s\le N} \hat u_s(\omega)-\min_{1\le s\le N} \hat u_s(\omega)},
\qquad
v_r(\omega)=
\frac{\hat v_r(\omega)-\min_{1\le s\le N} \hat v_s(\omega)}
{\max_{1\le s\le N} \hat v_s(\omega)-\min_{1\le s\le N} \hat v_s(\omega)},
\]
whenever the denominators are positive. Let $M(\omega)\in\mathbb{R}^{N\times N}$ denote the interpolation matrix obtained by evaluating the chosen tensor-product B-spline basis at $\{(u_r(\omega),v_r(\omega))\}_{r=1}^N$, and let $\kappa_2(\cdot)$ denote the $2$-norm condition number. We assume that there exist $\omega^*\in[0,2\pi]$ and $\tau>0$ such that
\[
\kappa_2\big(M(\omega^*)\big)\le \tau.
\]
This is a numerical assumption: for a given patch, it can be checked in practice by sampling $\omega$ on a grid and computing the corresponding condition numbers.
\end{condition}

\begin{proposition}[Square system solved at the best-conditioned rotation]
\label{prop:best_rotation}
Let $M(\omega)\in\mathbb{R}^{N\times N}$ be the square interpolation matrix constructed from the normalized parameter pairs $\{(u_r(\omega),v_r(\omega))\}$ obtained by rotation $T(\omega)\in\mathrm{SO}(2)$ followed by axis-wise min--max scaling, with $\omega\in[0,2\pi]$. Define
\[
\omega^* \in \arg\min_{\omega\in[0,2\pi]} \kappa_2\big(M(\omega)\big),
\]
and assume that $M(\omega^*)$ is nonsingular. Then the interpolation system
\[
X \;=\; M(\omega^*)\,P
\]
admits the unique solution $P=M(\omega^*)^{-1}X$. Moreover, for any data perturbation $\Delta X$,
\[
\|\Delta P\|_2 \;\le\; \|M(\omega^*)^{-1}\|_2\,\|\Delta X\|_2 \;=\; \frac{1}{\sigma_{\min}(M(\omega^*))}\,\|\Delta X\|_2,
\]
and, if $X\neq 0$,
\[
\frac{\|\Delta P\|_2}{\|P\|_2}
\;\le\;
\kappa_2\!\big(M(\omega^*)\big)\,\frac{\|\Delta X\|_2}{\|X\|_2}.
\]
Within this rotation-plus-normalization family, the choice of $\omega^*$ therefore minimizes the usual condition-number-based stability bound.
\end{proposition}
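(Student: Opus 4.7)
The plan is to reduce the proposition to the classical perturbation bounds for a nonsingular square linear system, applied at the specific rotation $\omega^*$. Since the statement already hands us the minimizer $\omega^*$ and postulates that $M(\omega^*)$ is nonsingular, there is no variational argument to run; the work is essentially bookkeeping plus two textbook inequalities.

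First, I would establish the existence and uniqueness claim. Because $M(\omega^*)\in\mathbb{R}^{N\times N}$ is square and nonsingular by hypothesis, the system $X=M(\omega^*)P$ admits the unique solution $P=M(\omega^*)^{-1}X$ coordinate-wise in $\mathbb{R}^3$ (each of the three columns of $X$ is solved independently, using the same factorization of $M(\omega^*)$). I would note here that nonsingularity also guarantees $\sigma_{\min}(M(\omega^*))>0$, so the quantities appearing in the stability bounds are finite.

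Next, I would derive the absolute perturbation bound. Perturbing $X\mapsto X+\Delta X$ and using linearity gives $\Delta P = M(\omega^*)^{-1}\Delta X$. Taking the spectral norm and applying submultiplicativity yields $\|\Delta P\|_2\le \|M(\omega^*)^{-1}\|_2\,\|\Delta X\|_2$, and the identity $\|M^{-1}\|_2=1/\sigma_{\min}(M)$ (valid for any nonsingular matrix) converts this to the stated form. For the relative bound, I would combine this with the lower bound $\|X\|_2\le\|M(\omega^*)\|_2\,\|P\|_2$ obtained directly from $X=M(\omega^*)P$, so that $1/\|P\|_2\le \|M(\omega^*)\|_2/\|X\|_2$ whenever $X\neq 0$; multiplying yields $\|\Delta P\|_2/\|P\|_2\le \kappa_2(M(\omega^*))\,\|\Delta X\|_2/\|X\|_2$. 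The final sentence of the proposition is then immediate from the definition of $\omega^*$ as a minimizer of $\kappa_2(M(\cdot))$ over $[0,2\pi]$: any other rotation in the family would produce a bound with a coefficient at least as large.

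There is really no hard step here; the closest thing to an obstacle is keeping clear that the bounds are only optimal \emph{within the rotation-plus-normalization family} and do not assert global optimality among all conditioning heuristics, so I would state this carefully in the conclusion to match the wording of the proposition. I would also briefly acknowledge that existence of the argmin is invoked as an assumption (inherited from Condition~\ref{cond:stability}), since $\kappa_2(M(\omega))$ need not be continuous at rotations where the normalization denominators vanish; in practice the minimum is taken over a finite sampling grid, which makes existence automatic.
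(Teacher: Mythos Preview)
Your proposal is correct and follows essentially the same argument as the paper: invoke nonsingularity of $M(\omega^*)$ for uniqueness, use $\Delta P = M(\omega^*)^{-1}\Delta X$ with submultiplicativity and $\|M^{-1}\|_2=1/\sigma_{\min}(M)$ for the absolute bound, combine with $\|X\|_2\le\|M(\omega^*)\|_2\|P\|_2$ for the relative bound, and conclude minimality from the definition of $\omega^*$. Your additional remarks on the columnwise solve in $\mathbb{R}^3$ and on the argmin being taken over a finite grid are reasonable clarifications that the paper's proof leaves implicit.
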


\begin{proof}
Let $M:=M(\omega^*)$. Since $M$ is invertible, $P=M^{-1}X$ is the unique solution of $X=MP$. For a perturbation $\Delta X$, one has $\Delta P = M^{-1}\Delta X$, hence
\[
\|\Delta P\|_2 \le \|M^{-1}\|_2\,\|\Delta X\|_2 = \frac{1}{\sigma_{\min}(M)}\,\|\Delta X\|_2.
\]
If $X\neq 0$, then $P=M^{-1}X$ and
\[
\frac{\|\Delta P\|_2}{\|P\|_2}
\le
\frac{\|M^{-1}\|_2\,\|\Delta X\|_2}{\|M^{-1}X\|_2}
\le
\|M^{-1}\|_2\,\|M\|_2\,\frac{\|\Delta X\|_2}{\|X\|_2}
=
\kappa_2(M)\,\frac{\|\Delta X\|_2}{\|X\|_2}.
\]
Because $\omega^*$ minimizes $\kappa_2(M(\omega))$ over $[0,2\pi]$, this bound is the smallest attainable by the proposed rotation-plus-normalization strategy.
\end{proof}

This reformulation introduces no additional degrees of freedom, requires no mesh refinement, and applies independently to each patch. It improves the conditioning of the interpolation system whenever an invertible, well-conditioned matrix $M(\omega^*)$ can be found by the above procedure. In practice we monitor the singular values of $M(\omega)$; if the smallest singular value of $M(\omega^*)$ falls below a prescribed tolerance, we do not attempt to invert $M(\omega^*)$ but instead solve a least-squares problem with more data points than unknowns. The resulting B-spline surface is smooth, but the associated control points are not guaranteed to lie geometrically close to the surface. This proximity is critical because geometric quantities, such as surface normals and curvature, are evaluated on the surface itself and used to update the control points. If control points drift too far, these updates become inconsistent or inaccurate. This motivates the next step in our pipeline, where we enforce the proximity of control points to the surface to ensure reliable geometry-driven evolution.

\subsection{Control Point Refinement Through Distance Minimization}
\label{subsec:control_point_refinement}

Following the conditioning-aware parameterization in Section~\ref{sec:stability_bsplines}, the B-spline surface $  \vec{\mathcal{S}} $ provides a smooth representation of each patch. However, the control points $ \{\vec{P}_{i,j}\}_{1 \leq i \leq m,\ 1 \leq j \leq n} \subset \mathbb{R}^3 $ may not lie near the surface they define. Large deviations, especially in regions of high curvature or anisotropic sampling, can compromise differential-geometric computations.

We quantify how well the control net reflects the surface geometry and use this to guide adaptive refinement via knot insertion.

\begin{definition}[Control Point Deviation]
\label{def:control_point_deviation}
Let $ \vec{P}_{i,j} \in \mathbb{R}^3 $ be a control point. Its deviation from the surface $  \vec{\mathcal{S}}(u,v) $ is defined as
\[
\delta(\vec{P}_{i,j}) := \min_{(u,v) \in [0,1]^2} \left\|  \vec{\mathcal{S}}(u,v) - \vec{P}_{i,j} \right\|,
\]
and the global deviation metric is
\[
\epsilon(\{\vec{P}_{i,j}\},  \vec{\mathcal{S}}) := \max_{1 \leq i \leq m,\ 1 \leq j \leq n} \delta(\vec{P}_{i,j}).
\]
\end{definition}
computing $ \delta(\vec{P}_{i,j}) $ requires solving a non-linear optimization problem for each control point, which is computationally expensive. In practice, we therefore adopt an efficient approximation based on evaluating the surface at the \emph{Greville abscissae}, which is commonly used to associate control points with representative parameter values.

\begin{definition}[Greville-Based Deviation Approximation]
\label{def:greville_deviation}
Let $ \Psi = \{\psi_k\} $ and $ \Theta = \{\theta_k\} $ be the knot vectors in the $ u $- and $ v $-directions, respectively. For each control point $ \vec{P}_{i,j} $, define the associated Greville abscissas as
\[
u_i^{\mathrm{G}} := \frac{1}{p} \sum_{k=1}^{p} \psi_{i+k}, \quad
v_j^{\mathrm{G}} := \frac{1}{q} \sum_{k=1}^{q} \theta_{j+k}.
\]
The Greville-based deviation is then defined by:
\[
\tilde{\delta}(\vec{P}_{i,j}) := \left\|  \vec{\mathcal{S}}(u_i^{\mathrm{G}}, v_j^{\mathrm{G}}) - \vec{P}_{i,j} \right\|, \quad
\tilde{\epsilon}(\{\vec{P}_{i,j}\},  \vec{\mathcal{S}}) := \max_{1 \leq i \leq m,\ 1 \leq j \leq n} \tilde{\delta}(\vec{P}_{i,j}).
\]
\end{definition}

This approximation provides a tractable surrogate for the exact deviation metric. Due to the local support of B-spline basis functions, the Greville abscissae often lie near the region of maximal influence of each control point, making this approximation both effective and computationally efficient.

If $ \tilde{\epsilon}(\{\vec{P}_{i,j}\},  \vec{\mathcal{S}}) > \epsilon_{\mathrm{tol}} $, where $ \epsilon_{\mathrm{tol}} > 0 $ is a prescribed tolerance, we refine the control net by knot insertion. The following lemma ensures that the refined spline space contains the original one.

\begin{lemma}[Knot Insertion for Tensor-Product Surfaces {\cite{lyche2008spline}}]
\label{lemma:knotinsertion_surface}
Let $ \mathbb{S}_{p,q,\Psi,\Theta} $ denote the tensor product spline space defined over the knot vectors $ \Psi $ and $ \Theta $. If $ \tilde{\Psi} \supseteq \Psi $ and $ \tilde{\Theta} \supseteq \Theta $, then
\[
\mathbb{S}_{p,q,\Psi,\Theta} \subseteq \mathbb{S}_{p,q,\tilde{\Psi},\tilde{\Theta}}.
\]
\end{lemma}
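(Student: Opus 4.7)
The plan is to reduce the tensor--product statement to the univariate knot-insertion property by induction and then tensorize. First, I would prove the univariate analogue: if $\tilde\Psi\supseteq\Psi$, then $\mathbb{S}_{p,\Psi}\subseteq\mathbb{S}_{p,\tilde\Psi}$. Because $\tilde\Psi$ can be obtained from $\Psi$ by inserting the extra knots one at a time, by transitivity it is enough to treat the single-knot insertion $\Psi\subset\Psi\cup\{\bar\psi\}$. For this, I would invoke Boehm's identity, which expresses each old basis function $\phi_i^{\,p}$ as a convex combination of at most two refined basis functions $\tilde\phi_\ell^{\,p}$, with coefficients determined by the position of $\bar\psi$ within the affected knot intervals. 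Since every spline $\sum_i c_i\phi_i^{\,p}$ is thereby rewritten as $\sum_\ell \tilde c_\ell\tilde\phi_\ell^{\,p}$, the containment follows. The same argument applied to $\Theta$ yields $\mathbb{S}_{q,\Theta}\subseteq\mathbb{S}_{q,\tilde\Theta}$.

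Next, I would tensorize. Any $f\in\mathbb{S}_{p,q,\Psi,\Theta}$ has the form
\[
f(u,v)=\sum_{i=1}^{m}\sum_{j=1}^{n} c_{i,j}\,\phi_i^{\,p}(u)\,\varphi_j^{\,q}(v).
\]
Applying the univariate refinement separately in each factor gives expansions $\phi_i^{\,p}=\sum_\ell \alpha_{i,\ell}\tilde\phi_\ell^{\,p}$ and $\varphi_j^{\,q}=\sum_r \beta_{j,r}\tilde\varphi_r^{\,q}$. Substituting and interchanging the finite sums produces
\[
f(u,v)=\sum_{\ell,r}\tilde c_{\ell,r}\,\tilde\phi_\ell^{\,p}(u)\,\tilde\varphi_r^{\,q}(v),
\qquad
\tilde c_{\ell,r}=\sum_{i,j} c_{i,j}\,\alpha_{i,\ell}\,\beta_{j,r},
\]
which exhibits $f$ as an element of $\mathbb{S}_{p,q,\tilde\Psi,\tilde\Theta}$. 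This establishes the desired inclusion.

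The only potentially nontrivial ingredient is the univariate Boehm identity, which is classical and exactly the content imported from \cite{lyche2008spline}; treating it as a black box, the remainder of the argument is pure bookkeeping. The main care needed is to ensure that the univariate refinement coefficients $\alpha_{i,\ell},\beta_{j,r}$ depend only on the knot vectors and not on the position of evaluation, so that substitution into the double sum is legitimate and the tensor-product structure is preserved; once this is noted, no further estimates or rank conditions are required.
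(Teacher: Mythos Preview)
Your argument is correct and is the standard route: reduce to the univariate case by inserting extra knots one at a time via Boehm's identity, then tensorize by substituting the univariate refinement relations into the double sum. The only point to be slightly careful about is that the refinement coefficients $\alpha_{i,\ell}$ and $\beta_{j,r}$ are indeed constants (functions of the knot vectors only), which you already flag.

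As for comparison with the paper: there is nothing to compare. The paper does not prove this lemma; it is stated with a citation to \cite{lyche2008spline} and used as a black box to justify that knot insertion enlarges the spline space without changing the surface. Your sketch essentially reconstructs the classical proof that the cited reference contains, so in that sense you are supplying what the paper deliberately omits rather than offering an alternative route.
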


To perform refinement, we identify the parameter location $ (u^*, v^*) \in [0,1]^2 $ at which the maximum deviation occurs:
\[
(u^*, v^*) = \arg\max_{i,j} \tilde{\delta}(\vec{P}_{i,j}),
\]
and insert new knots at these locations:
\[
\psi^* := u^*, \quad \theta^* := v^*, \quad \tilde{\Psi} := \Psi \cup \{\psi^*\}, \quad \tilde{\Theta} := \Theta \cup \{\theta^*\}.
\]

The updated control points $ \vec{P}_{i,j}' $ are computed recursively. For surfaces, refinement in both parametric directions yields:
\begin{equation}
\label{eq:refined_control_point}
\vec{P}_{i,j}' = (1 - \alpha_i)(1 - \beta_j) \vec{P}_{i-1,j-1} + (1 - \alpha_i)\beta_j \vec{P}_{i-1,j} + \alpha_i(1 - \beta_j) \vec{P}_{i,j-1} + \alpha_i \beta_j \vec{P}_{i,j},
\end{equation}
where
\[
\alpha_i = \frac{\psi^* - \psi_i}{\psi_{i+p} - \psi_i}, \quad \beta_j = \frac{\theta^* - \theta_j}{\theta_{j+q} - \theta_j}.
\]

The refined surface lies in the expanded space $ \mathbb{S}_{p,q,\tilde{\Psi},\tilde{\Theta}} $ and retains smoothness and continuity. The refinement procedure is applied iteratively until the deviation metric satisfies the following:
\[
\tilde{\epsilon}(\{\vec{P}_{i,j}\},  \vec{\mathcal{S}}) \leq \epsilon_{\mathrm{tol}}.
\]

This adaptive process ensures that the control points remain close to the surface they define, particularly in regions that require higher geometric accuracy. With this proximity, differential geometric quantities, such as unit normals and mean curvature, can be reliably evaluated using Equations~\eqref{eq:normalvector} and \eqref{eq:mean_curvature} at each data point. These quantities are then used in evolution equations to update the data points and control points in a geometry-aware manner.

\noindent
\textbf{Remark.}
Knot insertion enlarges the set of control points $ \{\vec{P}_{i,j}\} $ by augmenting the knot vectors $ \Psi $ and $ \Theta $ to $ \tilde{\Psi} = \Psi \cup \{\psi^*\} $ and $ \tilde{\Theta} = \Theta \cup \{\theta^*\} $, respectively. This increases the dimensionality of the control net and the spline space $ \mathbb{S}_{p,q,\tilde{\Psi},\tilde{\Theta}} $. To maintain a balanced and well-posed optimization in the subsequent evolution step, we introduce new data points $ \vec{x}_{\mathrm{new}} \in \mathbb{R}^3 $ by evaluating the refined surface $  \vec{\mathcal{S}}(u,v) $ at the Greville abscissae associated with the newly inserted knots: $\vec{x}_{\mathrm{new}} :=  \vec{\mathcal{S}}(u^{\mathrm{G}}, v^{\mathrm{G}})$. This ensures that the added degrees of freedom in the control net are guided by the geometry of the surface itself. The resulting point set remains representative of the surface, allowing the interpolation error (Definition~\ref{def:greville_deviation}) and the optimization procedure (Section~\ref{sec:po_evol_intererror}) to remain stable after refinement.
\subsection{Optimizing Control Points for Geometry-Driven Surface Evolution}
\label{sec:po_evol_intererror}

With a refined B-spline surface whose control points lie close to the surface, we perform geometry-driven evolution. This has two components: (i) update the discrete data points according to surface-derived velocities; (ii) optimize the control points to maintain an accurate surface representation.

Let $ \mathcal{P}_k \subseteq \mathbb{R}^3 $ denote the discrete data points in patch $ k $, with the core points $ \mathcal{P}_k^{\mathrm{core}} \subseteq \mathcal{P}_k $ used to drive the evolution. Each point $ \vec{x}_i(t) \in \mathcal{P}_k^{\mathrm{core}} $ evolves according to a velocity field derived from the geometric quantities of the B-spline surface representation. As introduced in Section~\ref{sec:general_framework}, the motion is governed by equation~\eqref{eq:moving_datavelocity}.

Since the B-spline surface $  \vec{\mathcal{S}}$ is defined by control points $ \{\vec{P}_{i,j}\}$, these control points must also evolve to reflect changes in geometry. We assume that each control point $ \vec{P}_{i,j}(t) $ moves with a velocity consistent with the surface:
\begin{equation}
\label{eq:moving_controlpoints}
    \vec{P}_{i,j}(t + \Delta t) = \vec{P}_{i,j}(t) + \Delta t \cdot \vec{V}_{i,j}(t),
\end{equation}
where $ \vec{V}_{i,j}(t) \in \mathbb{R}^3 $ is the velocity at the control point, typically defined by evaluating the surface velocity at the associated Greville abscissae $ (u_i^{\mathrm{G}}, v_j^{\mathrm{G}}) $.

After the evolution step, we assess the accuracy of the surface representation relative to the updated data points. The interpolation error is defined as:
\begin{equation}
\label{eq:interpolation_error}
    \mathrm{err}_{\mathrm{interp}} := \max_{\vec{x}_i \in \mathcal{P}_k} \left\| \vec{x}_i -  \vec{\mathcal{S}}(u_i, v_i) \right\|,
\end{equation}
where $ (u_i, v_i) $ denotes the parameter coordinates corresponding to $ \vec{x}_i $. If $ \mathrm{err}_{\mathrm{interp}} > \tau $, for a prescribed tolerance $ \tau > 0 $, the surface must be re-optimized to reduce this deviation.

To this end, we define the following least-squares optimization problem:
\begin{equation}
\label{eq:least_squares_min}
    \min_{\{\vec{P}_{i,j}\}} \sum_{\vec{x}_i \in \mathcal{P}_k} \left\| \vec{x}_i -  \vec{\mathcal{S}}(u_i, v_i) \right\|^2,
\end{equation}
which seeks the set of control points that best approximates the updated point-cloud in a global $ \ell^2 $-sense. We solve this problem iteratively using a Gauss-Seidel-type gradient descent. At each iteration $ k $, the control points are updated via:
\begin{equation}
\label{eq:gauss_seidel_update}
    \vec{P}_{i,j}^{(k+1)} = \vec{P}_{i,j}^{(k)} - \alpha \cdot \nabla_{\vec{P}_{i,j}} \left( \sum_{\vec{x}_\ell \in \mathcal{P}_k} \left\| \vec{x}_\ell -  \vec{\mathcal{S}}(u_\ell, v_\ell; \{\vec{P}_{i,j}^{(k)}\}) \right\|^2 \right).
\end{equation}
where $ \alpha > 0 $ is the step size and the gradient $ \nabla_{\vec{P}_{i,j}} $ is taken with respect to the control point $ \vec{P}_{i,j} $. The surface $  \vec{\mathcal{S}}(u,v; \{\vec{P}_{i,j}\}) $ is explicitly parameterized by the current control net, ensuring that optimization is performed on the surface geometry at each step. The iterations proceed until the interpolation error is sufficiently reduced or a maximum number of iterations is reached.

At each time step $ t \to t + \Delta t $, the data points and control points evolve according to the geometric velocity field, followed by an optimization of the control net to reduce the interpolation error. This process ensures that the B-spline surface remains smooth, accurate, and geometrically consistent with the evolving data. The error metric defined in Equation~\eqref{eq:interpolation_error} serves as a feedback mechanism, triggering the optimization of the control point whenever the surface deviation exceeds a prescribed tolerance.
\subsection{Optimizing point-cloud Density for Surface Evolution}
\label{sec:rem_ins_distr}
As the surface deforms, points in each core region $ \mathcal{P}_k^{\mathrm{core}} \subset \mathcal{X}(t) $ may become overly clustered or too sparse, degrading stability and accuracy. To address this, we introduce a local mechanism for inserting or removing data points within each core region $ \mathcal{P}_k^{\mathrm{core}} $, based on the sampling density in the parameter space.

Let the parameter pairs associated with  $ \mathcal{P}_k^{\mathrm{core}} $ be 
\[
\mathcal{U}_k := \{ (u_i, v_i) \in \Omega_k \subset [0,1]^2 \mid \vec{x}_i =  \vec{\mathcal{S}}(u_i, v_i),\ \vec{x}_i \in \mathcal{P}_k^{\mathrm{core}} \},
\]
where $ \Omega_k $ is the parameter subdomain corresponding to the $ k $-th core region. For each $ (u_i, v_i) \in \mathcal{U}_k $, let $ (u_j, v_j) \in \mathcal{U}_k $ be its nearest neighbor (in parameter or surface space), and define
\[
d_i := \left\|  \vec{\mathcal{S}}(u_i, v_i) -  \vec{\mathcal{S}}(u_j, v_j) \right\|.
\]

Given two thresholds $ d_{\min}, d_{\max} \in \mathbb{R}>0 $, we apply the following rules: Removal: If $ d_i < d_{\min} $, the point $ \vec{x}_i =  \vec{\mathcal{S}}(u_i, v_i) $ is removed from $ \mathcal{P}_k^{\mathrm{core}} $, and its corresponding parameter pair $(u_i, v_i)$ is removed from $ \mathcal{U}_k $. Insertion: If $ d_i > d_{\max} $, we insert a new parameter value $(u_{\text{new}}, v_{\text{new}}) \in \Omega_k$, chosen as a midpoint or subdivision between $(u_i, v_i)$ and $(u_j, v_j)$. The corresponding new data point is then calculated as $\vec{x}_{\text{new}} :=  \vec{\mathcal{S}}(u_{\text{new}}, v_{\text{new}}),$ and added to $\mathcal{P}_k^{\mathrm{core}} $.

This refinement is iterated within each core region until all pairwise distances satisfy
\[
d_{\min} \leq d_i \leq d_{\max}, \quad \forall \vec{x}_i \in \mathcal{P}_k^{\mathrm{core}}.
\]

 Let $ \mathcal{P}_k^{\mathrm{core}, (I)} $ be the core at iteration $I,$, and define, 
\[
\mathcal{P}_k^{\mathrm{core}, (I+1)} := \operatorname{Refine}(\mathcal{P}_k^{\mathrm{core}, (I)}, d_{\min}, d_{\max}).
\]

The refined data point set is then obtained by reassembling all core regions:
\begin{equation}
    \mathcal{X}(t)^{\mathrm{refined}} := \bigcup_{k=1}^K \mathcal{P}_k^{\mathrm{core}, (I_k)},
\end{equation}
where $ I_k $ is the final iteration index for the region $ k $. This process ensures that the point cloud remains geometrically well-distributed across the surface domain while preserving the non-overlapping structure of the core regions. It also guarantees that all inserted points lie exactly on the surface $  \vec{\mathcal{S}}(u,v) $ and that their parameter locations remain within the known subdomains $ \Omega_k $.

Because insertion/removal modifies $ \mathcal{X}(t) $, we recompute the control net $\{ \vec{P}_{i,j}(t) \}$ to interpolate the refined data, ensuring $  \vec{\mathcal{S}}(u,v; \{ \vec{P}_{i,j}(t) \}) $  remains consistent before proceeding to $ t \to t + \Delta t $.

\subsection{Computational Cost Analysis}
\label{sec:cost_analysis}

A principal goal of our framework is to reduce the computational cost associated with evolving surface point clouds. Traditional methods recompute surface interpolants at every time step. For $ N $ data points and stencil size $ m $, the interpolation cost per step is $ \mathcal{O}(N m^3) $, leading to a total cost over $ [0,T] $ of $\mathcal{O}\left( \frac{N m^3 T}{\Delta t} \right).$

In contrast, we construct an initial B-spline surface at $ t = 0 $ and then evolve it patch-wise using the decomposition introduced in Section~\ref{sec:patches}. Within each patch, the core points $ \mathcal{P}_k^{\mathrm{core}} $ drive the evolution, while the overlapping boundary points $ \mathcal{P}_k^{\mathrm{bdy}} $ ensure continuity. Let $ m_c $ denote the number of core points per patch, and $ K \sim N / m_c $ the total number of patches.

The initial interpolation requires solving $ K $ local linear systems, each of size $ m_c $. Since solving a system of size $ m_c $ has computational cost $ \mathcal{O}(m_c^3) $, the total one-time setup cost is $ K \times \mathcal{O}(m_c^3) = \mathcal{O}(K m_c^3) $. Substituting $ K \sim N / m_c $, we obtain an overall setup cost of $ \mathcal{O}(N m_c^2) $. During evolution, geometric quantities are updated at core points, and control points are adjusted via local optimization. This results in a per-step cost of $ \mathcal{O}(N m_c^2) $, and a total cost of $ \mathcal{O}\left( \frac{N m_c^2 T}{\Delta t} \right) $ over the interval $ [0, T] $. The overall cost of the proposed method is therefore $\mathcal{O}\left( N m_c^2 + \frac{N m_c^2 T}{\Delta t} \right)$, which is significantly lower than the traditional cost when $ m_c \ll m^3 $. The efficiency gain is captured by the ratio
\begin{equation}
    \frac{\text{Proposed}}{\text{Traditional}} \sim \frac{m_c^2}{m^3}.
\end{equation}
This reduction is achieved by reusing the initial surface structure and localizing control point updates, rather than recomputing the entire interpolant at each time step.

\section{Numerical Results and Simulations}\label{sec:numerical_results}

Before applying the proposed B-spline-based framework to simulate dynamic surface evolution, we first validate its precision for estimating geometric quantities on static surfaces. In particular, we assess the performance of the local B-spline fitting method in recovering unit normal vectors and mean curvature from scattered point-cloud data.

The test is carried out at the initial time $ t = 0 $ using synthetic point clouds sampled from a known smooth reference surface. We vary the total number of points in the cloud, with point counts $ N \in \{948, 1806, 2964, 3816, 4890\} $, to study the impact of resolution on accuracy. For each case, the surface is partitioned into disjoint core patches as described in Section~\ref{sec:patches}, and overlapping neighborhoods are constructed to allow local B-spline interpolation.

Figure~\ref{fig:error} presents errors in estimating unit normal vectors and mean curvature as functions of the number of core points $ m_c $ per patch. The errors are computed in the $ \ell^2 $-norm by comparing the estimated quantities with the exact analytical values defined on the reference surface. Each curve in the plots corresponds to a different point-cloud resolution. As expected, higher resolutions lead to improved accuracy. Interestingly, we observe that increasing the number of core points $ m_c $ tends to degrade the quality of the approximation. This behavior suggests that overly large patches may incorporate points from regions with varying curvature, leading to over-smoothing and reduced accuracy in local geometric estimation.

The choice of patch size is therefore critical for balancing efficiency and accuracy. A smaller number of core points $ m_c $ results in more localized B-spline fits, which better preserve fine-scale geometric features and reduce approximation error. However, working with smaller patches also increases the total number of local problems to solve, raising overall computational cost. Conversely, while larger patches may seem computationally attractive, they can negatively impact accuracy by blurring local geometric structure. This trade-off highlights the importance of selecting a patch size that ensures both computational feasibility and high-accuracy geometric reconstruction. The results in Figure~\ref{fig:error} underscore the method’s sensitivity to this choice.

We now demonstrate the method in dynamic scenarios, including mean curvature flow, curvature-driven transport, and biologically inspired tumor-growth dynamics on evolving geometries.
\begin{figure}
\begin{center}
\subfigure[Normal error]{
  \begin{overpic}[width=2.8in]{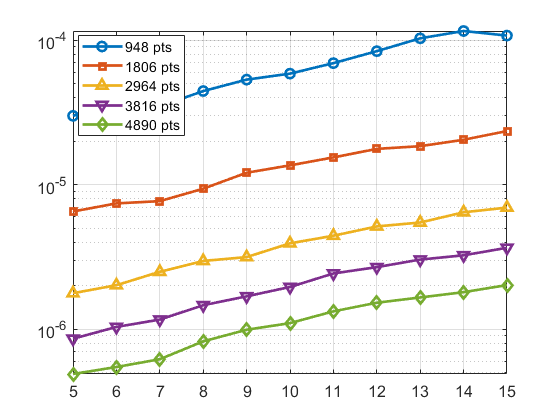}
    \put(30,300){\rotatebox{90}{\scriptsize $\ell^2$-norm error}}
    \put(500,-5){\scriptsize $m_c$}
  \end{overpic}
}
\subfigure[Mean curvature error]{
  \begin{overpic}[width=2.8in]{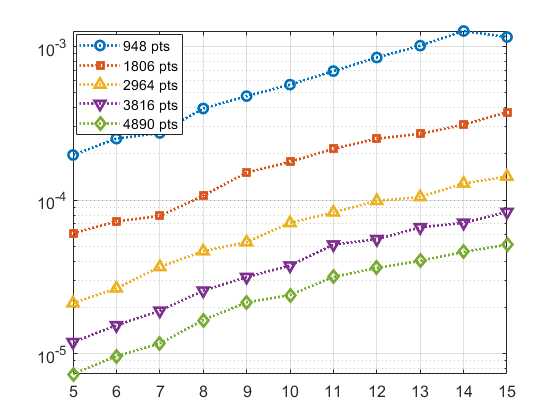}
    \put(30,300){\rotatebox{90}{\scriptsize $\ell^2$-norm error}}
    \put(500,-5){\scriptsize $m_c$}
  \end{overpic}
}
\caption{
$\ell^2$-norm error in estimating (a) unit normal vectors and (b) mean curvature at $ t = 0 $, using local B-spline fitting on scattered surface data. Each curve corresponds to a different point-cloud resolution with total points $ N \in \{948, 1806, 2964, 3816, 4890\} $. The x-axis represents the number of core points per patch $ m_c $, while the y-axis shows the corresponding $\ell^2$-norm error.
}
\label{fig:error}
\end{center}
\end{figure}

\subsection{Mean Curvature Flow}\label{sec:MCF}

This section demonstrates the application of the proposed B-spline-based geometric evolution method to simulate surface motion under mean curvature flow (MCF) using quasi-uniform point clouds. In the classical setting, MCF describes the deformation of a smooth hypersurface $\Gamma(t) \subset \mathbb{R}^3$ moving in the direction of its normal vector, with a speed proportional to its mean curvature. The evolution law is given by
\begin{equation}
    \frac{d \mathbf{x}}{dt} = -H \vec{n}, \label{eq:MCF_general}
\end{equation}
where $ H $ denotes the mean curvature (a scalar field) and $ \vec{n} $ is the unit outward normal vector at a point $ \mathbf{x} \in \Gamma(t) $. The vector $ -H \vec{n} $ is often referred to as the mean curvature vector, which encodes both the magnitude and direction of the surface motion.

In our framework, the surface is represented by a quasi-uniform point cloud. The quantities $ H $ and $ \vec{n} $ are estimated locally via B-spline fitting, and points are advected using a forward Euler discretization of \eqref{eq:MCF_general}.

\begin{figure}[t]
\centering
\begin{tikzpicture}
    \node[anchor=south west,inner sep=0] (image) at (0,0) {\includegraphics[width=5in]{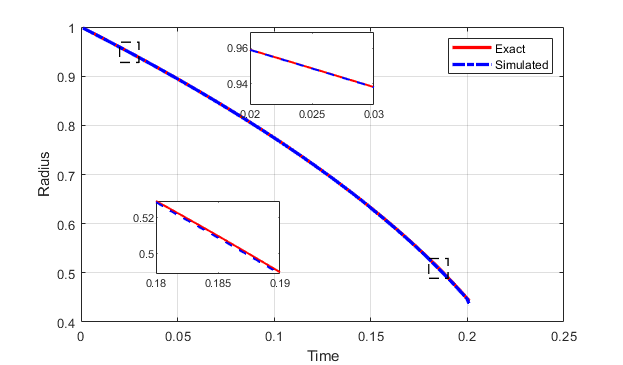}};
    \coordinate (A1) at (2.9,6.55);  
    \coordinate (B1) at (5.2,6.75);  
    \coordinate (C1) at (2.9,6.1);  
    \coordinate (D1) at (5.2,5.3);  
    \coordinate (A2) at (5.65,3.35);  
    \coordinate (B2) at (8.8,2.15);  
    \coordinate (C2) at (5.65,1.9);  
    \coordinate (D2) at (8.8,1.79); 
    \draw[dashed, black, thick] (A1) -- (B1); 
    \draw[dashed, black, thick] (C1) -- (D1); 
    \draw[dashed, black, thick] (A2) -- (B2); 
    \draw[dashed, black, thick] (C2) -- (D2); 
\end{tikzpicture}
\caption{Comparison between the numerically computed and analytic radius for the evolution of a sphere under mean curvature flow. The dashed blue curve shows the radius estimated at each time step by our point-cloud B-spline method, while the red line represents the exact solution given by Equation~\eqref{eq:sphere_exact}.}
\label{fig:radius_comparison}
\end{figure}
\begin{figure}
\begin{center}
\subfigure[$t=0$]{\includegraphics[width=1.5in]{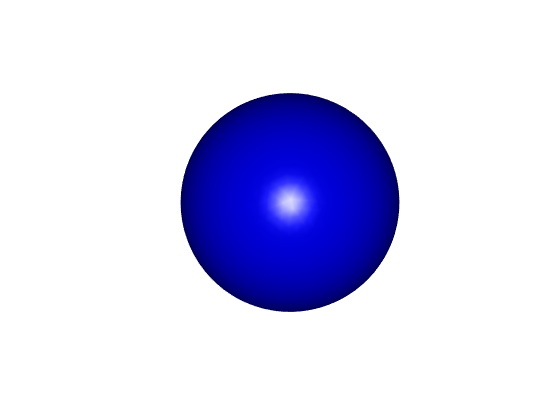}}\hspace{-25pt}
\subfigure[$t=0.05$]{\includegraphics[width=1.5in]{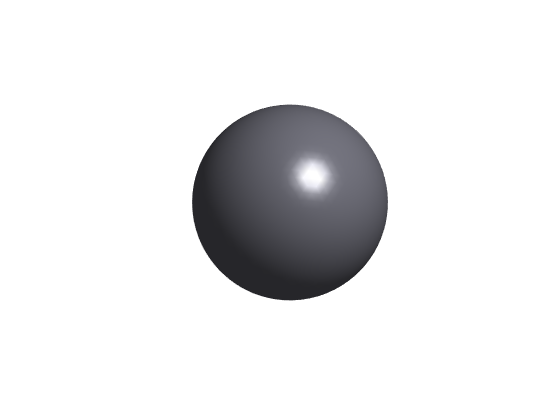}}\hspace{-25pt}
\subfigure[$t=0.10$]{\includegraphics[width=1.5in]{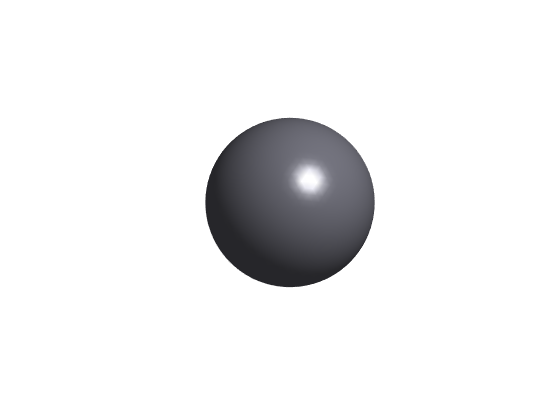}}\hspace{-25pt}
\subfigure[$t=0.15$]{\includegraphics[width=1.5in]{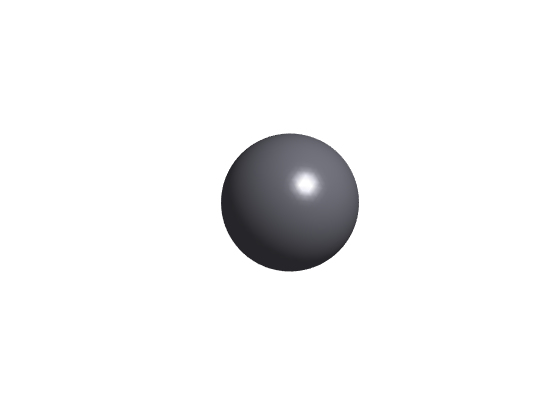}}\hspace{-25pt}
\subfigure[$t=0.20$]{\includegraphics[width=1.5in]{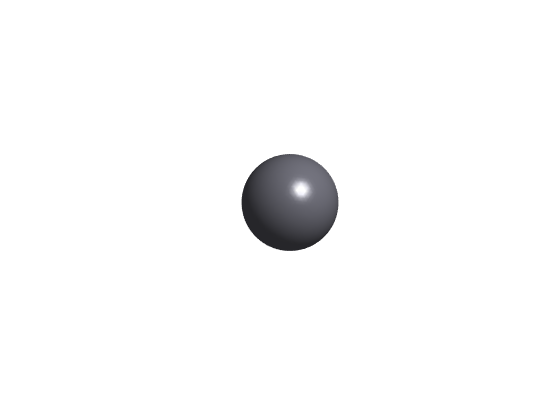}}\hspace{-15pt}
\caption{Evolution of a unit sphere under mean curvature flow, shown at representative times $t = 0$, $0.05$, $0.10$, $0.15$, and $0.20$. The point cloud shrinks isotropically as the surface contracts, illustrating the expected behavior for this classical test case.}
\label{fig:kmotiom_in_sphere}
\end{center}
\end{figure}

\subsection*{Example 1: Evolution of a Sphere}
As a preliminary test case, we consider a sphere with initial radius $ r(0) = 1 $. For a sphere, the mean curvature is spatially constant and equals $ H = 2/r $, so the evolution reduces to a purely radial contraction governed by the ordinary differential equation.
\begin{equation}
    \frac{dr(t)}{dt} = -\frac{2}{r(t)}, \label{eq:sphere_ODE}
\end{equation}
with a closed-form solution
\begin{equation}
    r(t) = \sqrt{r(0)^2 - 4t}. \label{eq:sphere_exact}
\end{equation}
The surface collapses at the extinction time $ t^* = r(0)^2 / 4 $.

To simulate this evolution, we generate a quasi-uniform point cloud of $ N = 4890 $ nodes on the unit sphere. At each time step, local B-spline surface patches are fitted to the neighborhood of each point, from which $ H $ and $ \vec{n} $ are computed. The points are then updated using the velocity field $ -H \vec{n} $, with time-step size $ \Delta t = 0.001 $. Figure~\ref{fig:kmotiom_in_sphere} illustrates the shrinking motion at different times, while Figure~\ref{fig:radius_comparison} compares the numerically estimated radius with the analytic solution \eqref{eq:sphere_exact}, showing excellent agreement.

\subsection*{Example 2: Evolution of an Ellipsoid}

To test the method on anisotropic geometries, we consider a point cloud sampled from an ellipsoid defined by the implicit equation
\begin{equation}
    \frac{x^2}{a^2} + \frac{y^2}{b^2} + \frac{z^2}{c^2} = 1, \label{eq:ellipsoid_general}
\end{equation}
where $ a, b, c > 0 $ are the semi-axes along the $ x $-, $ y $-, and $ z $-directions, respectively. In this experiment, we use $ a = 2 $, $ b = 1 $, and $ c = 1.5 $.

Equation \eqref{eq:ellipsoid_general} is used solely to construct the initial point cloud; the surface evolution is performed independently of any analytic description. A quasi-uniform set of $ N = 2842 $ points is sampled from the ellipsoid. As in the spherical case, local B-spline fitting is used to estimate curvature and normals, and the points are updated via \eqref{eq:MCF_general} with time step $ \Delta t = 0.001$.

As time progresses, the ellipsoid undergoes curvature-driven motion governed by mean curvature flow, causing the surface to shrink and become increasingly spherical before collapsing, akin to the behavior observed in the spherical case. This demonstrates the algorithm's ability to handle anisotropic geometries under curvature-driven flow. Figure~\ref{fig:mcf_ellipsoid} illustrates the evolution of the ellipsoid at different time steps, showing the progressive smoothing and shrinkage of the geometry under the mean curvature flow.
\begin{figure}
\begin{center}
\subfigure[$t=0$]{\includegraphics[width=1.5in]{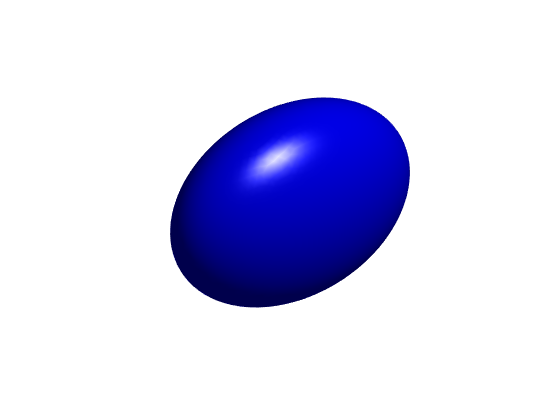}}\hspace{-25pt}
\subfigure[$t=0.2$]{\includegraphics[width=1.5in]{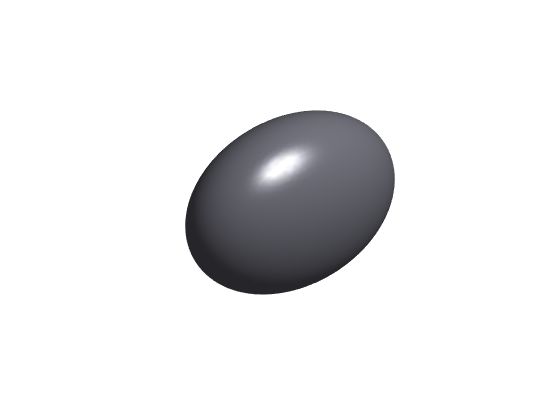}}\hspace{-25pt}
\subfigure[$t=0.4$]{\includegraphics[width=1.5in]{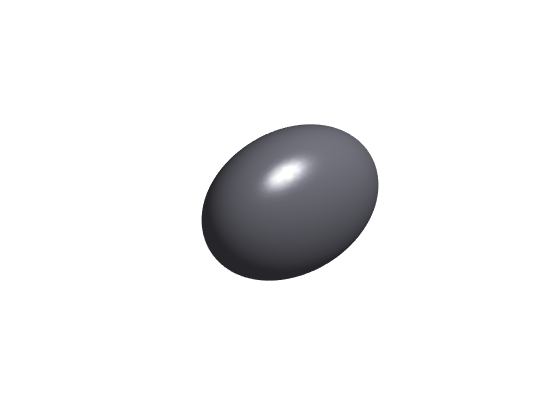}}\hspace{-25pt}
\subfigure[$t=0.6$]{\includegraphics[width=1.5in]{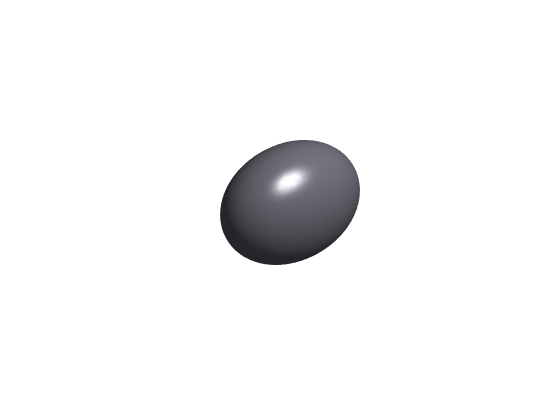}}\hspace{-25pt}
\subfigure[$t=0.8$]{\includegraphics[width=1.5in]{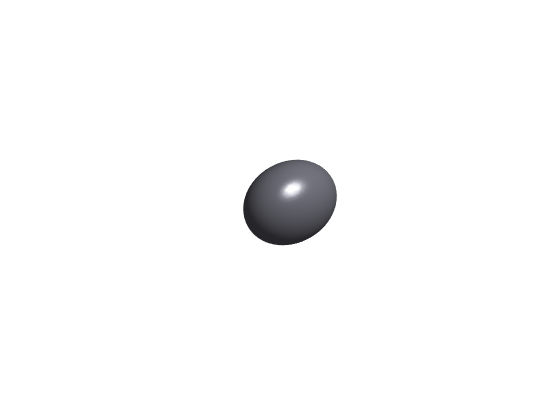}}\hspace{-15pt}
\caption{Evolution of a point-cloud sampled from an ellipsoid under mean curvature flow, shown at times $t = 0$, $0.2$, $0.4$, $0.6$, and $0.8$. The initial ellipsoid has semi-axes $a = 2$, $b = 1$, and $c = 1.5$. As the surface evolves, anisotropic shrinkage is observed: the ellipsoid becomes progressively rounder and eventually approaches a spherical shape before collapsing.}
\label{fig:mcf_ellipsoid}
\end{center}
\end{figure}

\subsection{Strongly Coupled Evolution on a Toroidal Surface}
We next consider a fully coupled problem involving both surface geometry and a scalar field defined on the surface. The geometry evolves in response to curvature and the scalar field, while the field itself evolves on the moving surface. Such interactions are common in physical and biological systems. This setting provides a rigorous test of the proposed numerical framework.

The initial surface is a torus with major radius $ R = 1 $ and minor radius $ r = 0.3 $. A quasi-uniform point cloud is generated by sampling the toroidal surface defined by the equation
\begin{equation}
    \Phi(x, y, z) := \left( R - \sqrt{x^2 + y^2} \right)^2 + z^2 - r^2 = 0,
    \label{eq:torus_points}
\end{equation}
which serves only as a geometric reference for initialization. The subsequent evolution is performed entirely within the point-cloud framework, using the B-spline-based method to estimate geometric quantities and surface differential operators, with no use of implicit surface representations.

The interface evolves according to a curvature- and field-dependent velocity law,
\begin{equation}
    \vec{V} = (\varepsilon H + \delta u)\vec{n},
    \label{eq:torus_velocity_general}
\end{equation}
where $ H $ is the mean curvature, $ \vec{n} $ is the outward unit normal vector, and $ \varepsilon, \delta > 0 $ are model parameters. The scalar field $ u \colon \Gamma(t) \rightarrow \mathbb{R} $ represents a time-dependent quantity defined on the surface, which may be prescribed analytically, obtained from experimental data, or computed externally (for example, by solving a reaction-diffusion equation on the evolving surface).

In many applications, the scalar field $ u $ may itself evolve according to a transport-diffusion equation on the moving surface, such as
\begin{equation}
    \frac{\partial u}{\partial t} + \vec{V} \cdot \nabla_\Gamma u - u \nabla_\Gamma \cdot \vec{V} - \Delta_\Gamma u = 0,
    \label{eq:scalar_transport_on_surface}
\end{equation}
where $ \nabla_\Gamma $ and $ \Delta_\Gamma $ denote the surface gradient and Laplace-Beltrami operators, respectively. This equation models the advection, surface diffusion, and geometric effects on $ u $ as the surface evolves.  
However, in the present work, we do not solve this PDE within our framework; instead, we assume that the value of $ u $ is available at each time step, regardless of how it is obtained. This allows the method to flexibly accommodate a wide range of physical and biological scenarios.

The velocity field $ \vec{V} $ is then used to update the positions of the point cloud, driving the geometric evolution of the surface. The B-spline-based method provides local interpolation and estimation of the necessary geometric quantities at each step.

This setup leads to a strongly coupled geometric evolution problem in which the scalar field and geometry interact in non-linear ways. Initially, large spatial gradients in $ u $ induce strong anisotropic deformation of the torus, while surface diffusion (if present in the external computation of $ u $) acts to smooth the field, leading to stabilization of the geometry. Figure~\ref{fig:kmotion_in_torus} illustrates the evolution of the toroidal surface at several time instances.

\begin{figure}
\centering
\begin{overpic}[width=6in]{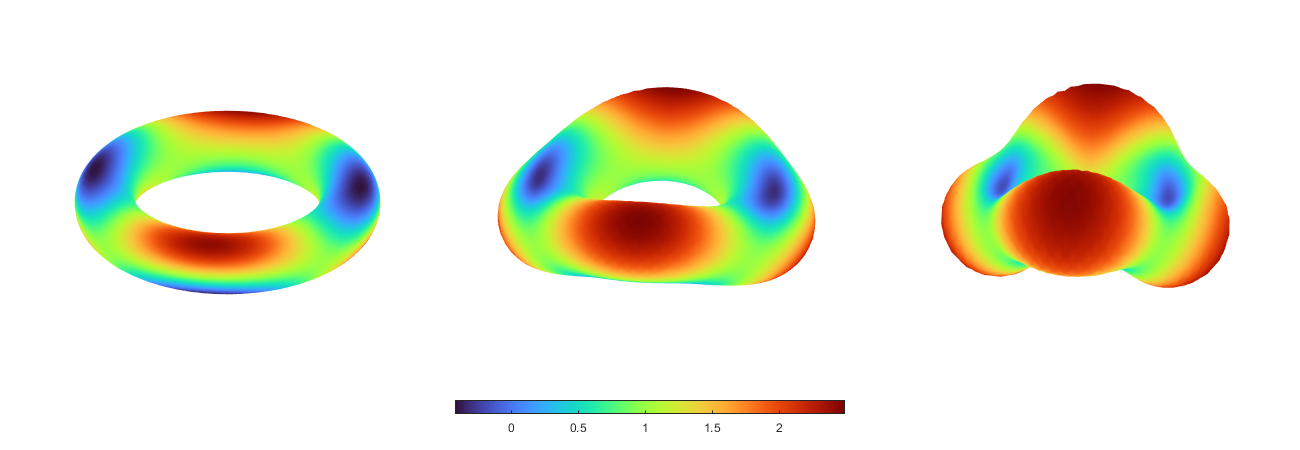}
  \put(150,300){\small $t = 0$}
  \put(450,300){\small $t = 0.25$}
  \put(800,300){\small $t = 0.5$}
\end{overpic}
\caption{Evolution of a toroidal surface and scalar field at $t = 0$, $0.25$, and $0.5$ under a coupled curvature- and field-driven velocity law. The snapshots show how strong initial gradients in the field induce anisotropic deformation, which is gradually smoothed by surface diffusion. This example demonstrates the algorithm's ability to simulate complex surface–field interactions.}
\label{fig:kmotion_in_torus}
\end{figure}

\subsection{A Theoretical Model for Tumor Growth on an Evolving Surface}
To further demonstrate the flexibility of the proposed method, we consider a canonical model for early-stage avascular tumor growth, in which the evolution of the tissue interface is coupled to a pair of reaction-diffusion equations on the deforming surface. This model is adapted from~\cite{petras2019least} and serves as a benchmark for strongly coupled surface–field dynamics.

Let $ u, w \colon \Gamma(t) \rightarrow \mathbb{R} $ denote the concentrations of a growth-promoting factor and a growth-inhibiting factor, respectively. The evolution of these fields is typically governed by the reaction-diffusion system
\begin{equation}
\begin{aligned}
\frac{D u}{D t} &= \Delta_\Gamma u - u \nabla_\Gamma \cdot \vec{V} + f_1(u, w), \\
\frac{D w}{D t} &= \mathcal{D} \Delta_\Gamma w - w \nabla_\Gamma \cdot \vec{V} + f_2(u, w),
\end{aligned}
\label{eq:tumor_model_system}
\end{equation}
where $ \frac{D}{Dt} $ is the material derivative, $ \Delta_\Gamma $ is the Laplace–Beltrami operator, and $ f_1, f_2 $ are nonlinear source terms.

While we reference this coupled PDE model to illustrate the kinds of multiphysics problems relevant to evolving biological interfaces, it is important to emphasize that the focus of this work is not on solving the reaction-diffusion system itself. Instead, the tumor growth example serves to demonstrate that the proposed point-cloud framework can accommodate arbitrary scalar fields, regardless of how they are obtained, and efficiently couple them to surface evolution. This highlights the method’s versatility and its potential for integration with a wide range of surface–field models.

The interface velocity is given by the law defined in~\eqref{eq:torus_velocity_general}, where $\vec{V}$ depends on both mean curvature and the scalar field $u$. The evolution of the point-cloud surface is thus directly influenced by these fields, irrespective of their origin. For the simulations, the values of $u$ and $w$ are provided at each time step. All geometric and differential quantities are estimated locally on the point cloud using the B-spline-based methods described in Section~\ref{sec:adaptive_bspline_surface}.

For the experiments shown in Figures~\ref{fig:kmotiom_TG2}, the parameters $\delta = 0.4$, $\varepsilon = 0.01$, number of points $N = 4890$, and time step size $\Delta t = 0.001$ are used. This example highlights the capacity of the proposed method to handle complex multiphysics systems on evolving surfaces, in which both the geometry and the fields evolve in a strongly coupled manner.
\begin{figure}
\centering
\begin{overpic}[width=6in]{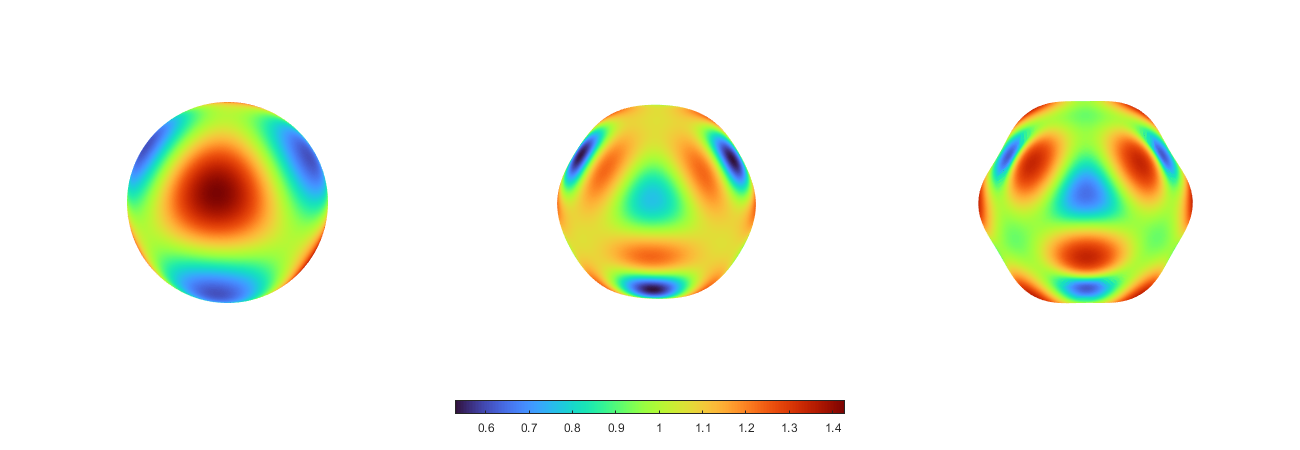}
  \put(150,300){\small $t = 0$}
  \put(450,300){\small $t = 0.025$}
  \put(790,300){\small $t = 0.05$}
\end{overpic}
\caption{Evolution of a spherical tumor interface and associated chemical fields at different times using a coupled reaction-diffusion and surface evolution model. The results demonstrate how the interface and the concentrations of growth-promoting and growth-inhibiting factors co-evolve on an adaptive point cloud, highlighting the algorithm’s capability to simulate complex multiphysics processes on evolving surfaces.}
\label{fig:kmotiom_TG2}
\end{figure}

\section{Conclusion}\label{sec:conclusion}
We presented a Lagrangian framework for evolving point-cloud surfaces using locally supported tensor-product B-spline patches. The key idea is to avoid global re-interpolation at every time step: control points are advanced together with data points, and light local iterations are used to maintain interpolation accuracy. This design preserves a consistent, differentiable surface representation while substantially reducing computational cost and avoiding global meshing or global re-parameterization. To support this workflow, we adopt a conditioning-aware local interpolation setup and a Greville-based deviation metric that triggers modest adjustments, such as knot insertion and point redistribution, only when needed. These auxiliary components help keep local fits accurate but do not alter the overall structure of the method.

Numerical experiments demonstrate the ability of the framework to accurately estimate differential geometric quantities and simulate a range of surface evolution problems, including mean curvature flow and strongly coupled surface field dynamics. For classical flows, the method replicates the theoretical behavior of shrinking geometries such as spheres and ellipsoids, while for coupled systems, it captures the nonlinear feedback between scalar fields and surface deformation, as exemplified by tumor growth and reaction-diffusion dynamics on evolving geometries. The computational cost analysis confirms that the localized patch construction and control point reuse yield significant reductions in per-step complexity compared to traditional global interpolation methods, with observed scaling consistent with theoretical bounds.

These results and the accompanying cost analysis indicate practical efficiency across the tested scenarios. At the same time, the present framework assumes a single smooth codimension-one surface and does not address topological changes (e.g., merging or splitting); oriented point data available for $t > 0$ could be used to detect opposing approaches and enable topology updates. Our experiments focus on closed, quasi-uniformly sampled surfaces, leaving highly non-uniform or noisy data and sharp-featured geometries for future assessment. Furthermore, coupling this framework with the method of fundamental solutions offers a promising direction for meshless arbitrary Lagrangian--Eulerian approaches to surface evolution~\cite{chen2021improved,fan2019localized}, enabling flexible, boundary-fitted simulations of evolving manifolds without global meshing.

\section*{Acknowledgment}
This work was supported by the General Research Fund (GRF No. 12301824, 12300922) of Hong Kong Research Grant Council.

  \bibliographystyle{elsarticle-num}
  \bibliography{ref-abrv}
\end{document}